\documentclass[11pt]{amsart}
\usepackage{amsmath,amssymb,amsthm}
\usepackage[margin=1.25in]{geometry}

\newtheorem{theorem}{Theorem}
\newtheorem{lemma}[theorem]{Lemma}
\newtheorem{corollary}[theorem]{Corollary}
\theoremstyle{remark}
\newtheorem{remark}[theorem]{Remark}

\DeclareMathOperator{\lgg}{lg}

\title{Sums of distinct divisors of factorials}
\author{Scott D. Hughes}
\address{INDEPENDENT RESEARCHER}
\email{v@deltagray.com}
\date{}
\dedicatory{\normalfont September 9, 2026}

\begin{document}

\begin{abstract}
For practical $N$ let $h(N)$ be the least $k$ such that every integer
$1\le m\le N$ is a sum of at most $k$ distinct divisors of $N$.  We
prove
\[
  h(n!)\;\le\;(2\log2+o(1))\,\frac n{\log n}.
\]
This improves the bounds of order $n/(\log n)^{1/2-\varepsilon}$
established in Tenenbaum--Yokota's Lemma~4 and Yokota's 1995 knapsack
note.  We combine their decreasing greedy construction with the sharper
factorial divisor-gap estimate of Berend--Harmse.  Counting the steps
separately below and above $\sqrt{n!}$, with the upper range handled
through reciprocal divisors, retains the leading coefficient in the
gap exponent and yields the explicit constant $2\log2$.
\end{abstract}

\maketitle

Throughout, $\log$ denotes the natural logarithm and $\lgg t=\log_2t$.

\section{Statement}

An integer $N\ge1$ is \emph{practical} if every $1\le m\le N$ is a sum
of distinct divisors of $N$; for practical $N$ let
\[
  h(N)=\min\bigl\{k:\ \text{every }1\le m\le N\text{ is a sum of at
  most }k\text{ distinct divisors of }N\bigr\}.
\]
Erd\H{o}s proved $h(n!)\le n$ and asked for the true order of
$h(n!)$ \cite[pp.~37--38]{ErGr80}.  Lemma~4 of Tenenbaum--Yokota \cite{TY90} is
stated with $N_k$ denoting either $k!$ or $\prod_{j\le k}p_j$; in
particular, for every $\eta>0$ it gives
\[
  h(n!)\;\ll_\eta\;\frac n{(\log n)^{1/2-\eta}}.
\]
Yokota's 1995 note \cite{Yo95} reaches the same exponent by a
decreasing greedy construction.  For every fixed
$0<\varepsilon<\tfrac12$ and all sufficiently large $k$, its proof
represents each integer $1\le S<2k!$ as a sum of distinct divisors of
$k!$, using
\[
  m\;\ll_\varepsilon\;\frac k{(\log k)^{1/2-\varepsilon}}
\]
terms.  Its Lemma~3 uses the older gap input
$\varepsilon_k=\exp\{-(\log k)^{3/2-\varepsilon}\}$, furnished by
Tenenbaum's factorial small-interval estimates; see the deduction on
pp.~6--7 and Theorem~4 of \cite{Te87}, with the corrections in
\cite{Te00}.  Neither \cite{TY90} nor \cite{Yo95} states a bound of
order $n/\log n$.  Theorem~\ref{thm:main} gives this order with an
explicit leading constant.

\begin{theorem}\label{thm:main}
$\displaystyle h(n!)\;\le\;(2\log2+o(1))\,\frac n{\log n}$ as
$n\to\infty$.
\end{theorem}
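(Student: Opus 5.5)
The plan is to run the decreasing greedy algorithm of \cite{Yo95}, but to feed it the Berend--Harmse gap estimate and to count its steps separately in two ranges. Given $1\le m\le n!$, set $r_0=m$. While $r_j>0$, let $d_{j+1}$ be the largest divisor of $n!$ with $d_{j+1}\le r_j$, and put $r_{j+1}=r_j-d_{j+1}$. The gap input I will use is the following. Let $d<d'$ be consecutive divisors of $n!$ with $d\le\sqrt{n!}$. Then
\[
  \frac{d'}{d}\;\le\;1+\exp\Bigl\{-\bigl(\tfrac1{4\log2}+o(1)\bigr)(\log n)^2\Bigr\}.
\]
This should hold uniformly once $d$ exceeds a threshold $T=\exp\{(\log n)^{2+o(1)}\}$. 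Write $\delta_n$ for the exponential on the right. This is the form in which I expect the Berend--Harmse estimate to be quoted; the coefficient is the one that produces $2\log2$ at the end.

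\textbf{Steps 1 and 2: greedy contraction and distinctness.} If $T\le r_j\le\sqrt{n!}$, the gap bound gives $r_{j+1}<\delta_n d_{j+1}\le\delta_n r_j$. Each such step therefore lowers $\log r$ by at least $(\frac1{4\log2}+o(1))(\log n)^2$. Since $r_{j+1}<d_{j+1}$ whenever the gap ratio is below $2$, the chosen divisors strictly decrease, and so they are distinct. This gives distinctness in the main range for free. Below $T$, I finish with a separate argument: $T$ is tiny compared with $n!$, and the smooth numbers up to $T$ are divisors of $n!$ with small gaps. An Erd\H{o}s-type step count on the remainder then costs only $O((\log n)^{2+o(1)})=o(n/\log n)$ terms. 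I also have to check that these small terms avoid the earlier, much larger ones, which is automatic since they are smaller.

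\textbf{Step 3: the upper range via reciprocal divisors.} For $r_j>\sqrt{n!}$ the gap bound is not available directly. Instead I use the involution $d\mapsto n!/d$, which carries consecutive divisors below $\sqrt{n!}$ to consecutive divisors above it with the same ratio $d'/d$. So the same relative gap $\delta_n$ holds throughout $[\sqrt{n!},\,n!/T]$. The range $[n!/T,\,n!]$ is handled by one or two extra greedy steps using $n!/k$ for small $k$. Both $[T,\sqrt{n!}]$ and $[\sqrt{n!},n!]$ have logarithmic length $\frac12\log n!=(\frac12+o(1))n\log n$. Hence each range contributes at most
\[
  \frac{(\tfrac12+o(1))\,n\log n}{(\tfrac1{4\log2}+o(1))(\log n)^2}=(2\log2+o(1))\frac{n}{2\log n}\cdot 2\cdot\tfrac12
\]
steps. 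Adding the two ranges gives the total $(2\log2+o(1))\,n/\log n$.

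\textbf{Main obstacle.} The delicate point is the gap input itself. I need the Berend--Harmse bound with its exact leading coefficient, and I need it uniformly for all $d$ in $[T,\sqrt{n!}]$, not only near $\sqrt{n!}$. If their estimate is stated only for divisors in a middle window, the reciprocal trick extends it only symmetrically. The remaining ranges near $T$ and near $n!/T$ would then need a cruder bound, for instance the older Tenenbaum exponent $(\log n)^{3/2-\varepsilon}$ from \cite{Te87}. The first thing I would check is that those end ranges have logarithmic length $o(n\log n)$, so that the cruder bound costs only $o(n/\log n)$ steps there.
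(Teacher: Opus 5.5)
There is a genuine gap. It lies in the gap input you flag yourself, and an arithmetic slip hides it.

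\textbf{The gap input.} Theorem~\ref{thm:BH} as stated only covers $D\in[\sqrt{(n-1)!},\sqrt{n!}]$, a window of logarithmic width $\tfrac12\log n$. To reach other scales you must apply it with $n$ replaced by each $j\le n$, which is legitimate because divisors of $j!$ divide $n!$. This gives $\log(b/d)\le 3\varepsilon_j$ when the geometric mean of consecutive divisors lies in $[\sqrt{(j-1)!},\sqrt{j!}]$, where $\log(1/\varepsilon_j)=\frac{(\log j)^2}{2\log2}(1+o(1))$.

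That exponent depends on position. Near your threshold $T=\exp\{(\log n)^{2+o(1)}\}$ the window index is $j=(\log n)^{2+o(1)}$, so the exponent is only $O((\log\log n)^2)$. A uniform bound $1+\exp\{-c(\log n)^2\}$ on all of $[T,\sqrt{n!}]$ is therefore not available. For $d<\exp\{c(\log n)^2\}$, which your $T$ does not rule out, it is even false outright, since $d'-d\ge1$.

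\textbf{The coefficient and the count.} The best exponent Berend--Harmse gives, attained only when $\log j=(1-o(1))\log n$, is $\frac{(\log n)^2}{2\log2}$, not $\frac{(\log n)^2}{4\log2}$. With your $\frac1{4\log2}$, each range costs $\frac{\frac12 n\log n}{(\log n)^2/(4\log2)}=2\log2\cdot\frac n{\log n}$ steps, so the total is $4\log2$. The right-hand side of your display simplifies to $\log2\cdot n/\log n$, which does not equal its left-hand side.

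\textbf{The fallback.} Your fallback is miscalibrated. End ranges of logarithmic length $o(n\log n)$, crossed in steps of size $(\log n)^{3/2-\varepsilon}$, cost $o(n/(\log n)^{1/2-\varepsilon})$ terms, not $o(n/\log n)$.

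What works is to cut at a window rather than at $T$. Let $m=\lceil n^{1-\eta}\rceil$ with $\eta=3\log\log n/\log n$.
\begin{itemize}
\item For $2\sqrt{(m-1)!}\le R\le\sqrt{n!}$, the geometric mean of the bracketing divisors exceeds $R/2$ and is at most $\sqrt{n!}$. So it lies in a window of index in $[m,n]$.
\item Corollary~\ref{cor:gap} and the monotonicity of $\varepsilon_j$ then give a drop in $\log R$ of at least $(1-\eta)^2\frac{(\log n)^2}{2\log2}(1+o(1))$ per step. That yields at most $(\log2+o(1))\,n/\log n$ steps.
\item The mirror $U=n!/R$ treats $\sqrt{n!}<R\le n!/(2\sqrt{(m-1)!})$ the same way.
\item The two leftover ranges have logarithmic length $O(n^{1-\eta}\log n)=O(n/(\log n)^2)$, so plain halving finishes them. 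This includes the top range, which in general needs many greedy steps, not ``one or two''.
\end{itemize}

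Repaired this way, your uniform-per-range count is a valid and simpler alternative to the paper's window-by-window integral and dyadic blocks. It gives the same constant and the same relative error $O(\log\log n/\log n)$. As written, however, its gap input is unproven and the constant $2\log2$ does not follow from it.
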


The proof retains the greedy construction of \cite{TY90,Yo95} and
uses Berend--Harmse's 1993 factorial gap estimate directly.  We
recompute the step counts in both ranges, retaining the coefficient
$1/(2\log2)$ in the gap exponent to obtain the constant $2\log2$.
Sections 2 and 3 give the details.

\section{The two ingredients}

\begin{theorem}[{Berend--Harmse \cite[Thm.~2]{BH93}}]\label{thm:BH}
For $n\ge2^{16}$ and $\sqrt{(n-1)!}\le D\le\sqrt{n!}$, there is a
divisor $x$ of $n!$ such that
\[
  \Bigl|\frac xD-1\Bigr|
  \;\le\;5\cdot10^{7}
  \Bigl(\frac{\lgg n}{n}\Bigr)^{\frac{\lgg n-\lgg(\lgg n)+1}2+\lgg e}
  \;\le\;\Bigl(\frac1n\Bigr)^{\frac{\lgg n}2-\lgg(\lgg n)}.
\]
\end{theorem}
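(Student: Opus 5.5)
The second inequality in Theorem~\ref{thm:BH} is elementary and I would check it directly; the first inequality is the content of \cite[Thm.~2]{BH93}, and for it I would only reconstruct Berend--Harmse's argument in outline. Write $L=\lgg n$, $\ell=\lgg L$, and $E=\frac{L-\ell+1}{2}+\lgg e$. Taking $\lgg$ of both sides, the second inequality becomes
\[
  \lgg(5\cdot10^7)-E\,(L-\ell)\;\le\;-\Bigl(\tfrac L2-\ell\Bigr)L .
\]
Multiplying out, this is equivalent to
\[
  \tfrac12\ell^2+\Bigl(\tfrac12+\lgg e\Bigr)(L-\ell)\;\ge\;\lgg(5\cdot10^7)\approx25.58 .
\]
At $n=2^{16}$ we have $L=16$ and $\ell=4$, so the left side is $8+1.94\cdot12\approx31.3$. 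The left side is increasing in $L$, because $\ell$ and $L-\ell$ are both nondecreasing for $L\ge2$. So the second inequality holds for all $n\ge2^{16}$.

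For the first inequality I would follow the multiplicative refinement of \cite{BH93}.
\begin{enumerate}
\item Start from a divisor $x_0\mid n!$ with $x_0\le D$ and $D/x_0\le 1+O(\lgg n/n)$. Such a divisor can be built greedily, since the multiset of factors $\{1,2,\dots,n\}$ contains consecutive integers near $n/\lgg n$, and these give multiplicative steps $(k+1)/k$.
\item Iteratively correct the remaining error $D/x_j$. Multiply $x_j$ by ratios $a/b$, where $a$ and $b$ are products of unused factors of $n!$ lying in a window around $n/\lgg n$. These ratios can be chosen so that each stage shrinks the relative error by a factor of roughly $\lgg n/n$.
\item Count the stages. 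The available factors support roughly $(L-\ell+1)/2$ stages before the prime exponents of $n!$ are exhausted. The extra $\lgg e$ in the exponent comes from a Stirling-type count of how much of $n!$ remains unused. This is why the hypothesis $\sqrt{(n-1)!}\le D\le\sqrt{n!}$ matters: it guarantees that about half of the factorial is still free after $x_0$ is chosen.
\end{enumerate}

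I expect the main obstacle to be the bookkeeping in step~3. One must show that at every stage enough unused factors remain in the right window, so that each corrective ratio is simultaneously a divisor of $n!/x_j$ and close enough to the target. The explicit constant $5\cdot10^7$ and the threshold $2^{16}$ come from making this count effective. In this paper I would simply cite \cite{BH93} for the first inequality and include the verification above for the second.
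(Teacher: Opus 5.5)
Your proposal follows the paper, which also quotes the first inequality directly from \cite[Thm.~2]{BH93} without reproving it. Your reduction of the second inequality to $\tfrac12\ell^2+(\tfrac12+\lgg e)(L-\ell)\ge\lgg(5\cdot10^7)\approx25.58$, checked at $L=16$ (value $\approx31.3$) and extended by monotonicity, is correct and supplies the routine verification that the paper asserts without proof. The outline of the Berend--Harmse method in your steps 1--3 is not needed and is speculative, so omit it or label it as heuristic.
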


Put
\[
  \varepsilon_j
  \;:=\;\Bigl(\frac1j\Bigr)^{\frac{\lgg j}2-\lgg(\lgg j)},
\]
the rightmost bound above at $n=j$, so that
\begin{equation}\label{eq:eps}
  \log\frac1{\varepsilon_j}
  \;=\;\Bigl(\frac{\lgg j}2-\lgg(\lgg j)\Bigr)\log j
  \;=\;\frac{(\log j)^2}{2\log2}
  \Bigl(1-\frac{2\log(\log j/\log2)}{\log j}\Bigr).
\end{equation}
Since $\bigl(\tfrac{\lgg x}2-\lgg(\lgg x)\bigr)\log x$ is increasing
for $x\ge2^{16}$, the sequence $(\varepsilon_j)_{j\ge2^{16}}$ is
decreasing.

\begin{corollary}\label{cor:gap}
Let $j\ge2^{16}$, and let $a<b$ be consecutive divisors of $n!$
(any $n\ge j$) with $\sqrt{(j-1)!}\le\sqrt{ab}\le\sqrt{j!}$.  Then
$\log(b/a)\le3\varepsilon_j$.
\end{corollary}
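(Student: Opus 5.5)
We apply Theorem~\ref{thm:BH} at the level $j$ rather than $n$, and then use that $a$ and $b$ are \emph{consecutive}: no divisor of $n!$ lies strictly between them, so the divisor it supplies must sit at or beyond one of the endpoints. Since $\sqrt{ab}$ is the midpoint of $a$ and $b$ on a logarithmic scale, this pins $b/a$ down.

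Set $D:=\sqrt{ab}$. This is a real number with $a<D<b$, and by hypothesis $\sqrt{(j-1)!}\le D\le\sqrt{j!}$. Since $j\ge 2^{16}$, Theorem~\ref{thm:BH} with $n$ replaced by $j$ gives a divisor $x$ of $j!$ with $|x/D-1|\le\varepsilon_j$. Because $j\le n$, we have $j!\mid n!$, so $x$ is also a divisor of $n!$. As $a<b$ are consecutive divisors of $n!$, either $x\le a$ or $x\ge b$.

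\emph{Case $x\le a$.} Here $1-\varepsilon_j\le x/D\le a/D$. Using $b=D^2/a$, we get $b/a=(D/a)^2\le(1-\varepsilon_j)^{-2}$.

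\emph{Case $x\ge b$.} Here $b/D\le x/D\le 1+\varepsilon_j$. Using $a=D^2/b$, we get $b/a=(b/D)^2\le(1+\varepsilon_j)^2$.

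In either case
\[
\log(b/a)\le\max\bigl\{-2\log(1-\varepsilon_j),\,2\log(1+\varepsilon_j)\bigr\}=-2\log(1-\varepsilon_j)\le\frac{2\varepsilon_j}{1-\varepsilon_j}.
\]
This is at most $3\varepsilon_j$ as soon as $\varepsilon_j\le 1/3$.

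It remains to check that $\varepsilon_j\le 1/3$. At $j=2^{16}$ the exponent is $\tfrac{16}{2}-\lgg 16=4$, so $\varepsilon_{2^{16}}=2^{-64}$. The sequence $(\varepsilon_j)$ is decreasing for $j\ge2^{16}$, as noted after \eqref{eq:eps}, so $\varepsilon_j\le 2^{-64}$ for all $j$ in range.

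There is no real obstacle here. The points to get right are applying Theorem~\ref{thm:BH} at $j$ rather than $n$, so that the hypothesis on $D$ is satisfied, together with the divisibility $j!\mid n!$ and the geometric-mean symmetry.
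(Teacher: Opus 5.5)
Your proof is correct and matches the paper's argument step for step: you apply Theorem~\ref{thm:BH} at level $j$ with $D=\sqrt{ab}$, use $j!\mid n!$ and consecutiveness to force $x\le a$ or $x\ge b$, and conclude with $\log(b/a)\le 2\varepsilon_j/(1-\varepsilon_j)\le 3\varepsilon_j$ via $\varepsilon_j\le\varepsilon_{2^{16}}=2^{-64}$. The only difference is cosmetic: you close with the threshold $\varepsilon_j\le 1/3$, whereas the paper uses $-\log(1-u)\le\tfrac43u$ for $u\le\tfrac14$.
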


\begin{proof}
Apply Theorem~\ref{thm:BH} at $D=\sqrt{ab}$; the resulting divisor $x$
of $j!$ divides $n!$.  Since $a,b$ are consecutive divisors of $n!$,
either $x\le a$ or $x\ge b$.  If $x\le a$, then
$1-\varepsilon_j\le x/D\le a/D=\sqrt{a/b}$, so
$b/a\le(1-\varepsilon_j)^{-2}$; if $x\ge b$, then
$\sqrt{b/a}=b/D\le x/D\le1+\varepsilon_j$, so
$b/a\le(1+\varepsilon_j)^2$.  Since
$-\log(1-u)\le u/(1-u)\le\tfrac43u$ for $0\le u\le\tfrac14$, both
cases give $\log(b/a)\le3\varepsilon_j$.  Here
$\varepsilon_j\le\varepsilon_{2^{16}}=(2^{-16})^{8-4}=2^{-64}<\tfrac14$.
\end{proof}

The second ingredient is the greedy mechanism used in the proof of
\cite[Lemma~4]{TY90}.  We use the standard fact, recalled there, that
the ratio of two consecutive divisors of $n!$ does not exceed $2$.

\begin{lemma}[greedy step]\label{lem:greedy}
Let $N$ be an integer whose consecutive divisors have ratio at most
$2$, and let $1\le R\le N$.  If $R$ divides $N$ (in particular if
$R=N$), the greedy expansion terminates at this step.  Otherwise, let
$d<R<b$ be the consecutive divisors of $N$ bracketing $R$.  Then
\[
  R-d\;<\;d,
  \qquad
  R-d\;\le\;2R\log\frac bd .
\]
Consequently successive divisors chosen by the greedy expansion are
strictly decreasing, hence distinct.
\end{lemma}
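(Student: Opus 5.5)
Since $R$ is not a divisor, $d<R<b$ with $d,b$ consecutive divisors of $N$, so $b\le 2d$ by hypothesis. The plan is to derive the two inequalities from elementary comparisons and then read off monotonicity of the greedy choices.

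For the first inequality, note that $R<b\le2d$, so $R-d<d$.

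For the second inequality, the key step is the elementary bound $1-1/t\le\log t$ for $t\ge1$, i.e.\ $\log t\ge (t-1)/t$. We apply it with $t=R/d>1$:
\[
  R-d \;=\; R\Bigl(1-\frac dR\Bigr)\;\le\; R\log\frac Rd\;\le\; R\log\frac bd\;\le\;2R\log\frac bd .
\]
The factor $2$ is slack and is not needed; we still state the inequality with it so that it matches the form used later.

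For the consequence, describe the greedy expansion. Start with $R_0=m\le N$ (for $m=N$ the expansion stops at once). At step $i$, if $R_i\mid N$ take $d_i=R_i$ and stop. Otherwise take $d_i$ to be the largest divisor of $N$ that is $<R_i$, and set $R_{i+1}=R_i-d_i$. The first inequality gives $R_{i+1}<d_i$. At the next step the chosen divisor $d_{i+1}$ is at most $R_{i+1}$, whether $R_{i+1}$ itself divides $N$ or not. Hence $d_{i+1}\le R_{i+1}<d_i$, so the divisors chosen are strictly decreasing and therefore distinct. The process terminates because $R_i$ is a strictly decreasing sequence of positive integers, and it ends exactly when some $R_i$ divides $N$. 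This always happens at the latest when $R_i=1$, since $1\mid N$.

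There is no real obstacle here. The only care needed is at the terminal step, where $R_i$ itself is taken as a divisor; it is still below the previous choice, so distinctness holds there too.
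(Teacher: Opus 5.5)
Your proof is correct and matches the paper's argument for the first inequality and for the strict decrease of the greedy choices. For the second inequality the paper writes $R-d<b-d=d(b/d-1)\le R(b/d-1)\le 2R\log(b/d)$, using $x-1\le 2\log x$ on $[1,2]$, which uses $b\le 2d$ a second time; your bound $\log t\ge 1-1/t$ at $t=R/d$ instead gives the sharper $R-d\le R\log(b/d)$ without the ratio hypothesis, and since the factor $2$ only enters the constant in $\delta_j$, which is absorbed in the error term, nothing downstream changes.
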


\begin{proof}
Since $R<b\le2d$, we have $R-d<b-d\le d$, so the next chosen divisor
is smaller than $d$.  Moreover
$R-d<b-d=d(b/d-1)\le R\,(b/d-1)\le2R\log(b/d)$, because
$b/d\in[1,2]$ and $x-1\le2\log x$ on that interval.
\end{proof}

\section{Proof of Theorem \ref{thm:main}}

Set $j_0=2^{16}$ and $T_0=2\sqrt{(j_0+1)!}$, an absolute constant, and
write $N=n!$.  We may assume $n\ge j_0^2$, since the theorem is
asymptotic.  Let $1\le m\le N$ and run the greedy expansion; write
$R_0=m>R_1>\cdots$ for the successive remainders (the expansion
terminates whenever a remainder divides $N$) and $\ell(R)=\log R$.
All step counts below concern nonterminal subtractions; if the
process terminates at a positive divisor of $N$, that final divisor
contributes one additional term, absorbed in the $O(1)$ endgame
count.

Two symmetric facts about the bracketing divisors $d<R<b$ locate the
Berend--Harmse window.  If $R\le\sqrt N$ then $db\le N$: otherwise
$N/d<b$, while $N/d\ge N/R\ge\sqrt N\ge R>d$, so $N/d$ would be a
divisor strictly between $d$ and $b$.  Symmetrically, if $R>\sqrt N$
then $db\ge N$.

\subsection*{Lower range $T_0\le R\le\sqrt N$}
Let $j(R)\in[j_0+1,n]$ be minimal with $R\le\sqrt{j(R)!}$.  For a
nonterminal step at $R$ with bracketing divisors $d<R<b$, the
geometric mean $\sqrt{db}$ lies in $[R/2,2R]$ (as $d>R/2$, $b<2R$)
and, by the preceding paragraph, $\sqrt{db}\le\sqrt N$.  Each window
$[\sqrt{(j-1)!},\sqrt{j!}]$ has logarithmic width
$\tfrac12\log j\ge\log2$, so $\sqrt{db}$ lies in the window of index
$j(R)-1$, $j(R)$, or $j(R)+1$; moreover $j(R)-1\ge j_0$ by the choice
of $T_0$.  When $j(R)=n$, the shared endpoint $\sqrt N$ is assigned to
the window of index $n$.  Corollary \ref{cor:gap} and the monotonicity
of $\varepsilon_j$ give
$\log(b/d)\le3\varepsilon_{j(R)-1}=:\tfrac12\delta_{j(R)}$, and by
\eqref{eq:eps}, applied at $j-1$ (the index shift and the constant
$\log6$ are absorbed in the error term),
\[
  s_j\;:=\;\log\frac1{\delta_j}
  \;=\;\frac{(\log j)^2}{2\log2}
  \Bigl(1+O\Bigl(\frac{\log\log j}{\log j}\Bigr)\Bigr).
\]
Lemma \ref{lem:greedy} gives
$R_{i+1}\le2R_i\log(b/d)\le R_i\,\delta_{j(R_i)}$, i.e.
\[
  \ell(R_{i+1})\;\le\;\ell(R_i)-s_{j(R_i)} .
\]
Each step descends by at least $s_{j(R_i)}$; moreover $s_j$ is
increasing in $j$ and $j(R)$ is nondecreasing in $R$, so
$s_{j(e^\ell)}\le s_{j(R_i)}$ throughout the $i$-th step's interval
$[\ell(R_{i+1}),\ell(R_i)]$, whence
$\int_{\ell(R_{i+1})}^{\ell(R_i)}d\ell/s_{j(e^\ell)}\ge1$.  The step
intervals are disjoint, and all but possibly the final one (which may
cross $\log T_0$) lie inside $[\log T_0,\tfrac12\log N]$.  Hence
\begin{align*}
  \#\{\text{steps with }T_0\le R\le\sqrt N\}
  &\le 1+\int_{\log T_0}^{\frac12\log N}\frac{d\ell}{s_{j(e^\ell)}}
  \;\le\;O(1)+\sum_{j=j_0+1}^{n}\frac{\tfrac12\log j}{s_j}\\
  &=\log2\sum_{j=j_0+1}^{n}\frac1{\log j}
   +O\Bigl(\sum_{j=j_0+1}^{n}\frac{\log\log j}{(\log j)^2}\Bigr)+O(1)\\
  &=\Bigl(\log2+O\Bigl(\frac{\log\log n}{\log n}\Bigr)\Bigr)
   \frac n{\log n},
\end{align*}
since the integral over the window of index $j$, of logarithmic
width $\tfrac12\log j$, contributes at most $\tfrac12\log j/s_j$,
and $\sum_{j\le n}1/\log j\sim n/\log n$.

\subsection*{Upper range $\sqrt N<R\le N/T_0$}
Put $U=N/R$.  If $d<R<b$ are the bracketing divisors of $R$, then
$N/b<U<N/d$ are consecutive divisors of $N$ with the same ratio
$b/d$; their geometric mean $N/\sqrt{db}$ lies in $[U/2,2U]$ and, by
$db\ge N$, is at most $\sqrt N$.  Thus the window argument of the
lower range applies verbatim \emph{at the mirror position}, giving
\[
  R_{i+1}\;\le\;\delta_{j(U_i)}\,R_i,
  \qquad\text{i.e.}\qquad
  \log U_{i+1}-\log U_i\;\ge\;s_{j(U_i)},
\]
where now $U_i=N/R_i$ \emph{increases} along the expansion.  The
charging integral of the lower range cannot simply be mirrored — the
monotonicity of $s_{j(U)}$ runs the wrong way — so we count these
steps dyadically instead.

Let
\[
  R_n:=\left\lfloor\log_2\frac{n}{2j_0}\right\rfloor,
  \qquad J_r:=\frac{n}{2^{r+1}}
  \quad(0\le r\le R_n),
\]
and let
\[
  \mathcal B_r:=\{j\in\mathbb Z:J_r<j\le2J_r\}.
\]
Then $J_r\ge j_0$ for $0\le r\le R_n$, and these blocks cover all
indices $j>J_{R_n}$ up to $n$; the remaining indices satisfy
$j\le J_{R_n}<2j_0$.  Consider the steps whose start satisfies
$j(U_i)\in\mathcal B_r$.  Their
$\log U_i$ lie in an interval of length at most
\[
  L_r\;\le\;\tfrac12\sum_{J_r<j\le2J_r}\log j
  \;=\;\tfrac12J_r\log J_r+O(J_r),
\]
where the sum is over integers.  Successive starts in this block are
separated in $\log U$ by at least $s_{\lfloor J_r\rfloor+1}$, so their
number is at most
\[
  1+\frac{L_r}{s_{\lfloor J_r\rfloor+1}}
  \;=\;\Bigl(\log2+O\Bigl(\frac{\log\log J_r}{\log J_r}\Bigr)\Bigr)
  \frac{J_r}{\log J_r}+O(1) .
\]
The relative-error terms are summable at the required scale:
\[
  \sum_{r=0}^{R_n}
    \frac{J_r\log\log J_r}{(\log J_r)^2}
  =O\!\left(\frac{n\log\log n}{(\log n)^2}\right)
  =o\!\left(\frac n{\log n}\right).
\]
Indeed, put $g(t)=\log\log t/(\log t)^2$.  The function $g$ is
decreasing for $t\ge j_0$.  For the blocks with $J_r\ge\sqrt n$,
$g(J_r)\le g(\sqrt n)$ and $\sum J_r=O(n)$, giving
$O(n\log\log n/(\log n)^2)$.  For the remaining blocks,
$g(J_r)=O(1)$ and $\sum J_r=O(\sqrt n)$, which is
$o(n/\log n)$.
The remaining indices have $j(U_i)<2j_0$, hence
$T_0\le U_i\le\sqrt{(2j_0)!}$; this is a fixed logarithmic interval,
and the positive lower bound for the step size gives $O(1)$ further
steps.  The accumulated $O(1)$ terms from the $O(\log n)$ dyadic
blocks total $O(\log n)=o(n/\log n)$.  Summing over the blocks with
$J_r\ge j_0$, and using
$\sum_{r=0}^{R_n}J_r/\log J_r=(1+o(1))\,n/\log n$,
\[
  \#\{\text{steps with }\sqrt N<R\le N/T_0\}
  \;\le\;(\log2+o(1))\,\frac n{\log n}.
\]

\subsection*{Endgames}
For $N/T_0<R\le N$ the mirror $U=N/R$ lies below $T_0$; every
nonterminal step still has $d>R/2$, hence $R_{i+1}<R_i/2$, so this
region contributes at most $\log T_0/\log2+O(1)=O(1)$ steps.  The same
halving argument disposes of $1\le R<T_0$ at the bottom.  Adding the
two main passages and the $O(1)$ endgames proves the theorem. \qed

\section{Remarks}

\begin{remark}
The proof gives the more explicit relative error
$O(\log\log n/\log n)$ in the constant, inherited from the
$\lgg(\lgg n)$ term in Theorem \ref{thm:BH}; Berend--Harmse note
(their Remark~2) that up to a bounded power of $\lgg n$ their bound is
the best their method produces.
\end{remark}

\begin{remark}
In the other direction, writing $T=\tau(n!)$ and $k=h(n!)$: if
$k\le T/2$ then
\[
  n!\;\le\;\sum_{i=0}^{k}\binom Ti\;\le\;(eT/k)^k .
\]
We only need the estimate $\log T\ll n/\log n$, which follows from
Chebyshev's bound $\pi(x)\ll x/\log x$.  Indeed, in
$\log T=\sum_{p\le n}\log(v_p(n!)+1)$, where $v_p$ denotes the
$p$-adic valuation, primes $p\le\sqrt n$ contribute
$O(\sqrt n\log n)$.  For $p>\sqrt n$ in
$(n/2^{r+1},n/2^r]$, we have $v_p(n!)=\lfloor n/p\rfloor<2^{r+1}$,
and there are $O(n/(2^r\log n))$ such primes.  Their total contribution
is therefore
\[
  \ll\frac n{\log n}\sum_{r\ge0}\frac{r+1}{2^r}
  \ll\frac n{\log n}.
\]
Taking logarithms in the counting inequality and using
$\log(n!)\asymp n\log n$ now gives $k\gg(\log n)^2$.
If $k>T/2$, the same conclusion follows from $T\ge n$, since every
integer from $1$ to $n$ divides $n!$.  The gap between $(\log n)^2$
and $n/\log n$ remains; Erd\H{o}s asked whether $h(n!)<n^{o(1)}$, or
even $h(n!)<(\log n)^{O(1)}$ \cite[pp.~37--38]{ErGr80}.
\end{remark}

\begin{remark}
Yokota's note \cite{Yo95} uses the same two greedy ranges separated
at $\sqrt{k!}$, with the upper range mirrored through $k!/S$.  Its
Lemma~2 supplies the ratio-at-most-$2$ fact used in
Lemma~\ref{lem:greedy}.  The sharper estimate of Berend--Harmse
\cite[Thm.~2]{BH93} gives
\[
  \log(1/\varepsilon_j)=\frac{(\log j)^2}{2\log2}
    +O(\log j\log\log j).
\]
The present proof applies this estimate directly and counts the steps
in both ranges to retain its leading coefficient.  The contribution
of Theorem~\ref{thm:main} is the resulting bound with constant
$2\log2$; the greedy construction is due to Tenenbaum--Yokota and
Yokota, and the sharper gap estimate is due to Berend--Harmse.
Berend--Harmse appeared in 1993; Yokota's paper was received in
September 1994 and cites Tenenbaum's 1987 paper and Hardy--Wright.
\end{remark}

\end{document}